\newtheorem{thm}{Theorem}
\newtheorem{lem}{Lemma}
\theoremstyle{definition}
\theoremstyle{remark}
\numberwithin{equation}{section}
\begin{document}

\title{On the Elementary Proof of the Inverse ErdŐs-Heilbronn Problem}

\author{Shengning Zhang}

\address{}

\email{}

\begin{abstract}
In this article, we studied the inverse Erdős-Heilbronn problem with the restricted sumset from two components $A$ and $B$ that are not necessarily the same. We give a completely elementary proof for the problem in $\mathbb{Z}$ and some partial results that contributes to the elementary proof of the problem in $\mathbb{Z}/p\mathbb{Z}$, avoiding the usage of the powerful polynomial method and the Combinatorial Nullstellensatz.

\end{abstract}

\maketitle

\section{Introduction}
\setlength{\parindent}{0pt}
A basic object in Additive Number Theory is the sumset of sets $A$ and $B$: 
$$A+B=\{a+b: a\in A, b\in B \}.$$ A simple problem in Additive Number Theory is: Given two subsets $A$ and $B$ of sets of integers, what properties can we determined about $A+B$? A classic result is the Cauchy-Davenport Theorem proved by Cauchy \cite{Ca1813} in 1813 and independently by Davenport \cite{Da1935} in 1935. Let $p$ be a prime, if $A$ and $B$ are nonempty subsets of $\mathbb{Z}/p\mathbb{Z}$, then the theorem asserts that $$\left | A+B \right |\ge \min\{p, |A|+|B|-1\}.$$
Similarly, we can define the restricted sumsets of sets $A$ and $B$: $$A\widehat{+}B=\{a+b: a\in A, b\in B, a\ne b \}.$$ In 1964, P. Erdős and H. Heilbronn \cite{Eh1964} conjectured that $$\left | A\widehat{+}A \right |\ge \min\{p, 2|A|-3\},$$ where $p$ is a prime and $A$ is a nonempty subset of $\mathbb{Z}/p\mathbb{Z}$. The conjecture was then proved by J. A. Dias da Silva and Y. O. Hamidoune \cite{Jy1994} in 1994 through the usage of methods from linear algebra. The more general case $$\left | A\widehat{+}B \right |\ge \min\{p, |A|+|B|-3\}$$ was established and proved by N. Alon, Melvin B. Nathanson, and Imre Z. Ruzsa \cite{Na1995} in 1995 using the polynomial method. 
\vspace{1pc}

These theorems above depict the properties of sumsets and restricted sumsets given the knowledge of the individual sets that make up the sumsets or restricted sumsets. Thus, one may consider the inverse directions of these problems and there indeed exists several beautiful results. In particular, Gy. Károlyi \cite{Gy2009} points out the following inverse problem of the Erdős-Heilbronn conjecture:
\vspace{1pc}

Let sets $A, B\subseteq \mathbb{Z}/p\mathbb{Z}$ be nonempty sets such that $p\ge |A|+|B|-2$, where $p$ is a prime. Then $|A\widehat{+}B|=|A|+|B|-3$ if and only if $A=B$ and one of the following holds:
\begin{enumerate}
    \item $|A|=2$ or $|A|=3$;
    \item $|A|=4$ and $A=\{a, a+d, c, c+d\}$;
    \item $|A|\ge 5$ and $A$ is an arithmetic progression.
\end{enumerate}
\vspace{1pc}

This paper is motivated to give an elementary proof of the results. Firstly, the original proof of this statement \cite{Gy2009} relies on the combinatorial Nullstellensatz and the polynomial method through the entire proof of the theorem. Inspired by \cite{Su2013}, we believe that this should be unnecessary for such an elementary statement. Secondly, we hope to obtain a more explicit view to the structures of the sumsets in $\mathbb{Z}$ and $\mathbb{Z}/p\mathbb{Z}$ by an elementary proof. 
\vspace{1pc}

Our strategy is to first prove the inverse problem of the Erdős-Heilbronn conjecture for $\mathbb{Z}$, which is of independent interests. Our result generalizes Nathanson's proof \cite{GTM165}, where the case $A=B$ is studied. In our work $A$ and $B$ are not necessarily the same. We then give some partial results for the elementary proof of the case in $\mathbb{Z}/p\mathbb{Z}$ through Theorem 5, Theorem 6 and Theorem 7.

\section{Notations}
To simplify our proofs, we will use the following notations throughout the rest of the paper:
\begin{enumerate}
    \item Let $G$ be the group $\mathbb{Z}$ or $\mathbb{Z}/p\mathbb{Z}$. For nonempty finite sets $A$, $B\subseteq G$ with $|A|$, $|B|\ge 2$, note $$A=\{a_1, a_2,\cdots, a_m\}\ (m=|A|)$$ and $$B=\{b_1, b_2,\cdots, b_n\}\ (n=|B|).$$ If $G=\mathbb{Z}$, we always assume that $a_1<a_2<\cdots<a_m$ and $b_1<b_2<\cdots<b_n$;\ If $G=\mathbb{Z}/p\mathbb{Z}$, we identity it as set with $\{0,1,2,\cdots, p-1\}$, and without the abuse of notation, we also assume  $a_1<a_2<\cdots<a_m$ and $b_1<b_2<\cdots<b_n$;
    \vspace{1pc}
    
    \item For nonempty finite sets $A$, $B\subseteq G$ with $|A|, |B|\ge 2$, define the restricted sumset of $A$ and $B$ as $A\widehat{+}B$. $A\widehat{+}B=\{a+b: a\in A, b\in B, a\ne b \}$ if $G=\mathbb{Z}$ and $A\widehat{+}B=\{a+b \mod{p}: a\in A, b\in B, a\ne b \}$ if $G=\mathbb{Z}/p\mathbb{Z}$;
    
    \vspace{1pc}
    
    \item For nonempty finite sets $A$, $B\subseteq G$ with $|A|, |B|\ge 2$, if $$|A\widehat{+}B|=|A|+|B|-3,$$ then we call the unordered pair $(A, B)$ a critical pair;
    \vspace{1pc}
    
    \item For nonempty finite set $A\subseteq G$ with $|A|\ge 2$, if $A$ can be written into the form $A=\{\tau+id: i=0, 1,\cdots, |A|-1\}$ for some $\tau, d\in G$ with $d\ne 0$, then we call set $A$ a standard set;
    \vspace{1pc}
    
    \item If sets $A$ and $B$ are both standard sets and $A=B$, then we call the unordered pair $(A, B)$ a standard pair.
     \vspace{1pc}
     
     \item Suppose $m, n\in\mathbb{Z}_+\cup\{0\}$ and $p$ is a prime. Let $X\in \mathbb{Z}/p\mathbb{Z}$ be a nonempty set with $|X|\ge 2$. Suppose $d$ is a given generator of $\mathbb{Z}/p\mathbb{Z}$. Then $[m, n]_d$ is called a gap with respect to $d$ in $X$ if $rd\notin X$ for $\forall r\in\{m, m+1,\cdots, n\}$. The length of the gap $[m, n]_d$ is $(n-m+1)$.
\end{enumerate}

\section{The Cases of $\mathbb{Z}$ for $2\le|A|\le 4$}
\begin{thm}
Let finite sets $A, B\subseteq\mathbb{Z}$ be nonempty sets. If $|A|=2$, then $(A, B)$ is a critical pair if and only if $A=B$.
\end{thm}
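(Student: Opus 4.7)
The forward direction is immediate: if $A = B$ with $|A|=2$, then the only admissible restricted pair (up to order) is $(a_1, a_2)$, so $A\widehat{+}B = \{a_1+a_2\}$ has size $1 = |A|+|B|-3$. For the reverse direction, suppose $|A\widehat{+}B| = n-1$, where $n = |B|$. My plan is to analyze two monotone chains of sums to force the extreme elements of $B$ to coincide with $a_1$ and $a_2$, and then conclude with a short maximum-element comparison that rules out $n \ge 3$.

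First I would consider the strictly increasing chain
\[
a_1+b_1 < a_1+b_2 < \cdots < a_1+b_n < a_2+b_n
\]
of $n+1$ distinct integers in $A+B$. A chain-element $a_i+b_j$ fails to lie in $A\widehat{+}B$ only when $a_i = b_j$; since $a_1$ appears in $n$ summands and can match at most one $b_j$, while $a_2$ appears only in the final summand, at most two chain-elements are excluded. This already gives $|A\widehat{+}B| \ge n-1$, and equality forces exactly two losses: $a_1 \in B$ and $a_2 = b_n$. The symmetric chain $a_1+b_1 < a_2+b_1 < \cdots < a_2+b_n$ dually forces $a_1 = b_1$ and $a_2 \in B$. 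Combining, $a_1 = b_1$ and $a_2 = b_n$, so $A$ consists of precisely the two extreme elements of $B$.

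It remains to rule out $n \ge 3$. If $n \ge 3$, the $n-1$ admissible sums $a_1+b_2, \ldots, a_1+b_n$ all belong to $A\widehat{+}B$ and are bounded above by $a_1+a_2 = a_1+b_n$. But $a_2+b_{n-1}$ is also admissible (since $b_{n-1} \ne b_n = a_2$) and strictly exceeds $a_1+a_2$ because $b_{n-1} > b_1 = a_1$. This exhibits an $n$-th element of $A\widehat{+}B$, contradicting $|A\widehat{+}B| = n-1$. Hence $n = 2$ and $B = A$. I do not anticipate any serious technical obstacle; the whole argument is a careful chain count plus a single extremal comparison, with the only delicate point being the simultaneous equality analysis in the two chains, which with $|A|=2$ is constrained enough to pin down $b_1$ and $b_n$ immediately.
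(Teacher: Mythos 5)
Your argument is correct, and it is essentially the paper's proof in slightly different packaging: the monotone chain count is the same extremal device the paper uses to force $a_1=b_1$, $a_2=b_n$ and $a_1,a_2\in B$, and your final comparison $a_2+b_{n-1}>a_1+b_n$ plays the role of the paper's observation that $\{a_1\}\widehat{+}B=\{a_2\}\widehat{+}B$ forces $b_2=b_n$. Both routes rest on the ordering of $\mathbb{Z}$ and a one-line extremal contradiction, so no substantive difference.
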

\begin{proof}
We first consider the sufficiency of the conditions. If $A=B$, then assume that $A=B=\{a_1, a_2\}$. Thus, $$|A\widehat{+}B|=|\{a_1+a_2\}|=1=|A|+|B|-3.$$
\vspace{1pc}

For the necessity of the conditions, suppose $A=\{a_1, a_2\}$. If $a_2\ne b_n$, due to $a_2>a_1$ and $b_n=\max_{x\in B}\{x\}$, we have $$a_2+b_n\in (A\widehat{+}B)\setminus(\{a_1\}\widehat{+}B).$$ Together with $$(\{a_1\}\widehat{+}B)\cup \{a_2+b_n\}\subseteq A\widehat{+}B,$$ there is  $$|A\widehat{+}B|\ge|(\{a_1\}\widehat{+}B)\cup \{a_2+b_n\}|=|B|-1+1>|A|+|B|-3.$$ A contradiction! Therefore, $a_2= b_n$. 
\vspace{1pc}

Similarly, if $a_1\ne b_1$, we have $$a_1+b_1\in (A\widehat{+}B)\setminus(\{a_2\}\widehat{+}B),$$ because $a_1<a_2$ and $b_1=\min_{x\in B}\{x\}$. Thus, combining with $$(\{a_2\}\widehat{+}B)\cup \{a_1+b_1\}\subseteq A\widehat{+}B,$$ there is $$|A\widehat{+}B|\ge|(\{a_2\}\widehat{+}B)\cup \{a_1+b_1\}|=|B|-1+1>|A|+|B|-3.$$ Consequently, $a_1=b_1$. 
\vspace{1pc}

Since $a_1$, $a_2\in B$, $|\{a_1\}\widehat{+}B|=|\{a_2\}\widehat{+}B|=|B|-1=|A|+|B|-3=|A\widehat{+}B|.$ Thus, due to  $\{a_1\}\widehat{+}B$, $\{a_2\}\widehat{+}B\subseteq A\widehat{+}B,$ there is $\{a_1\}\widehat{+}B=\{a_2\}\widehat{+}B=A\widehat{+}B.$ Now, because $a_1\ne b_2$, we have $a_1+b_2\in \{a_1\}\widehat{+}B=\{a_2\}\widehat{+}B.$ Therefore, $\exists b_i\in B\ (i\in \mathbb{Z}_+)$, such that $a_1+b_2=a_2+b_i$. Notice that $a_2>a_1$, so we must have $b_i<b_2$. According to the definition of $B$, $b_i=b_1$. Together with $a_1=b_1$, we have $b_n=a_2=b_2.$ So, we have proven that $A=B$.

\end{proof}
\begin{thm}
Let finite sets $A, B\subseteq\mathbb{Z}$ be nonempty sets. If $|A|=3$, then $(A, B)$ is a critical pair if and only if $A=B$.
\end{thm}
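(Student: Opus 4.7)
The plan is to extend the extremal-element strategy of Theorem~1 by combining it with a staircase lower bound for $|\{a_1,a_2\}\widehat{+}B|$ together with a two-sided counting argument. Sufficiency is immediate: if $A=B=\{a_1,a_2,a_3\}$, then $A\widehat{+}A=\{a_1+a_2,a_1+a_3,a_2+a_3\}$ has exactly three elements, so $|A\widehat{+}A|=3=|A|+|B|-3$.

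For necessity, assume $|A\widehat{+}B|=n$. The case $n=2$ is handled by a short case analysis on $|A\cap B|\in\{0,1,2\}$, using $|A+B|\ge m+n-1$ and the observation that at most $|A\cap B|$ sums of $A+B$ can be missing from the restricted sumset; this yields $|A\widehat{+}B|\ge 3$ in every configuration, precluding a critical pair.

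For $n\ge 3$, I would first establish $|\{a_1,a_2\}\widehat{+}B|=n$. The staircase $a_1+b_1<a_1+b_2<\cdots<a_1+b_n<a_2+b_n$ has $n+1$ distinct sums in $\{a_1,a_2\}+B$ and loses at most two to diagonals, namely $a_1+b_k$ when $a_1=b_k$ and $a_2+b_n$ when $a_2=b_n$, giving $|\{a_1,a_2\}\widehat{+}B|\ge n-1$. Theorem~1 forbids equality for $n\ge 3$ since $\{a_1,a_2\}\ne B$, so combining with $\{a_1,a_2\}\widehat{+}B\subseteq A\widehat{+}B$ forces $|\{a_1,a_2\}\widehat{+}B|=n$. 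In particular $\{a_3\}\widehat{+}B\subseteq\{a_1,a_2\}\widehat{+}B$, so if $a_3\ne b_n$ then $a_3+b_n$ would equal $a_i+b$ with $i\in\{1,2\}$ and $b\in B$, forcing $b>b_n$, a contradiction. Hence $a_3=b_n$, and the symmetric argument with $\{a_2,a_3\}$ applied to the minimal sum $a_1+b_1$ yields $a_1=b_1$.

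With $a_1=b_1$ and $a_3=b_n$ secured, the closing step is a two-sided staircase count. On the high side, the $n+1$ sums $a_1+b_2<a_1+b_3<\cdots<a_1+b_{n-1}<a_1+a_3<a_2+a_3$ together with $a_3+b_{n-1}$ all lie in $A\widehat{+}B$; since $|A\widehat{+}B|=n$, two of them must collide, and the only possible collision is $a_3+b_{n-1}=a_2+a_3$, forcing $a_2=b_{n-1}$. The mirror count on the low side using $a_3+b_1<a_3+b_2<\cdots<a_3+b_{n-1}$ adjoined with $a_1+a_2$ and $a_1+b_2$ forces $a_2=b_2$. Thus $b_2=a_2=b_{n-1}$, which forces $n=3$ and $A=B$. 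I expect the main obstacle to be the bookkeeping in the two-sided count: one must verify that each listed sum admits a non-diagonal representation and confirm that the asserted collision is the only possibility among the $n+1$ sums.
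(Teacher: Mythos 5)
Your argument for $n=|B|\ge 3$ is correct, but it takes a genuinely different route from the paper's. The paper first proves $a_1,a_3\in B$ by contradiction (comparing $\{a_i\}\widehat{+}B$, which would have size $|B|=|A\widehat{+}B|$ if $a_i\notin B$, with the translates of the other two elements), then derives $a_1=b_1$ and $a_3=b_n$ by separate counting arguments, then shows $a_2\in B$, then $a_2=b_{n-1}$, and finally rules out $|B|\ge 4$. You instead bootstrap from Theorem~1: the staircase bound $|\{a_1,a_2\}\widehat{+}B|\ge n-1$ together with Theorem~1 (which forbids $(\{a_1,a_2\},B)$ from being critical since $\{a_1,a_2\}\ne B$ when $n\ge 3$) forces $\{a_1,a_2\}\widehat{+}B=A\widehat{+}B$, from which $a_3=b_n$ and, symmetrically, $a_1=b_1$ drop out at once; the two-sided pigeonhole on explicit chains then delivers $a_2=b_{n-1}$ and $a_2=b_2$ in one stroke, hence $n=3$. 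This is shorter and makes the inductive role of the $|A|=2$ case explicit, whereas the paper uses Theorem~1 only to conclude $|B|\ge 3$. I checked the two collision claims: on the high side $a_3+b_{n-1}>a_1+a_3$ because $b_{n-1}>b_1=a_1$, so the only available collision is with $a_2+a_3$; on the low side $a_1+b_2<a_1+b_n=a_3+b_1$, so the only available collision is with $a_1+a_2$. Both are sound, and every listed sum does admit a non-diagonal representation once $a_1=b_1$ and $a_3=b_n$ are in hand.

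The one genuine gap is in your $n=2$ subcase. The bound $|A\widehat{+}B|\ge|A+B|-|A\cap B|\ge 4-|A\cap B|$ gives only $|A\widehat{+}B|\ge 2$ when $|A\cap B|=2$, which does not preclude criticality, since $|A|+|B|-3=2$ there; and both diagonal sums really can be lost, e.g.\ $A=\{0,1,2\}$, $B=\{0,2\}$ loses $0$ and $4$ from $A+B$. The case remains easy to close: either check the three configurations of a two-element $B\subseteq A$ directly (each gives $|A\widehat{+}B|=3$), or note that criticality of the unordered pair with $|B|=2$ forces $A=B$ by Theorem~1, contradicting $|A|=3$ --- this is essentially how the paper dispatches it. But the counting argument as you stated it does not suffice in that configuration.
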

\begin{proof}
Considering the sufficiency of the conditions, if $A=B$, we may assume that $A=B=\{a_1, a_2, a_3\}$, then$$|A\widehat{+}B|=|\{a_1+a_2, a_1+a_3, a_2+a_3\}|=3=|A|+|B|-3.$$
\vspace{1pc}

In order to prove the necessity of the conditions, we will first prove that $A\subseteq B$. If $a_1\notin B$, then $$|\{a_1\}\widehat{+}B|=|B|=|A|+|B|-3=|A\widehat{+}B|.$$ Since $\{a_1\}\widehat{+}B\subseteq A\widehat{+}B$ and $\{a_2, a_3\}\widehat{+}B\subseteq A\widehat{+}B$, we have $$\{a_2, a_3\}\widehat{+}B\subseteq\{a_1\}\widehat{+}B=A\widehat{+}B.$$ This indicates that for $\forall b_i\in B\ (i\in \mathbb{Z}_+)$ with $b_i\ne a_2$, $\exists b_j\in B\ (j\in \mathbb{Z}_+)$ such that $a_2+b_i=a_1+b_j.$ Suppose $a_2\ne b_n$, then $a_2+b_n=a_1+b_j$. However, for $\forall b_k\in B\ (k\in\mathbb{Z}_+)$, there is $a_2+b_n>a_1+b_k$. A contradiction! Thus, we must have $a_2=b_n$. Notice that for $\forall b_k\in B\ (k\in\mathbb{Z}_+$), there is $a_3+b_n>a_1+b_k$. Therefore, by applying the same argument, there is $a_3=b_n$, which contradicts to $a_3>a_2$. Consequently, we must have $a_1\in B$.
\vspace{1pc}

If $a_3\notin B$, then $$|\{a_3\}\widehat{+}B|=|B|=|A|+|B|-3=|A\widehat{+}B|.$$ Since $\{a_3\}\widehat{+}B\subseteq A\widehat{+}B$ and $\{a_1, a_2\}\widehat{+}B\subseteq A\widehat{+}B$, we have $$\{a_1, a_2\}\widehat{+}B\subseteq\{a_3\}\widehat{+}B=A\widehat{+}B.$$ This indicates that for $\forall b_i\in B\ (i\in \mathbb{Z}_+)$ with $b_i\ne a_2$, $\exists b_j\in B\ (j\in \mathbb{Z}_+)$ such that $a_2+b_i=a_3+b_j.$ Suppose $a_2\ne b_1$, then $a_2+b_1=a_3+b_j$. However, we know that for $\forall b_k\in B\ (k\in\mathbb{Z}_+)$, there is $a_2+b_1<a_3+b_k$. A contradiction! Thus, we must have $a_2=b_1$. Notice that for $\forall b_k\in B\ (k\in\mathbb{Z}_+$), there is $a_1+b_1<a_2+b_k$. Therefore, by applying the same argument, there is $a_1=b_1$, which contradicts to $a_2>a_1$. Consequently, we must have $a_3\in B$.
\vspace{1pc}

Suppose $a_1\ne b_1$, then $a_1+b_1\in (A\widehat{+}B)\setminus(\{a_3\}\widehat{+}B)$, for $a_1<a_3$ and $b_1=\min_{x\in B}\{x\}$. Thus, $a_2=b_1$, or else we can get $a_2+b_1\in (A\widehat{+}B)\setminus(\{a_3\}\widehat{+}B)$ with the same argument above. Apparently, $(\{a_3\}\widehat{+}B)\cup \{a_1+b_1, a_2+b_1\}\subseteq A\widehat{+}B.$ However, because $(A, B)$ is a critical pair, \begin{align*}
 |(\{a_3\}\widehat{+}B)\cup \{a_1+b_1, a_2+b_1\}| &= |\{a_1\}\widehat{+}B|+|\{a_1+b_1, a_2+b_1\}| \\
 &= (|B|-1)+2 \\
 &=  |A|+|B|-2 \\
 &=  |A\widehat{+}B|+1
\end{align*}
leads to a contradiction! Therefore, $a_1=b_1$.
\vspace{1pc}

Suppose $a_3\ne b_n$, then $a_3+b_n\in A\widehat{+}B\setminus\{a_1\}\widehat{+}B$, for $a_1<a_3$ and $b_n=\max_{x\in B}\{x\}$. So, $a_2=b_n$, or else we can get $a_2+b_n\in (A\widehat{+}B)\setminus(\{a_1\}\widehat{+}B)$ with the same argument above. Apparently, $(\{a_1\}\widehat{+}B)\cup \{a_2+b_n, a_3+b_n\}\subseteq A\widehat{+}B.$ However, because $(A, B)$ is a critical pair, 
\begin{align*}
 |(\{a_1\}\widehat{+}B)\cup \{a_2+b_n, a_3+b_n\}| &= |\{a_1\}\widehat{+}B|+|\{a_2+b_n, a_3+b_n\}| \\
 &= (|B|-1)+2 \\
 &=  |A|+|B|-2 \\
 &=  |A\widehat{+}B|+1
\end{align*}
leads to a contradiction! Therefore, $a_3=b_n$.
\vspace{1pc}

If $a_2\notin B$, then $$|\{a_2\}\widehat{+}B|=|B|=|A|+|B|-3=|A\widehat{+}B|.$$ Since $\{a_2\}\widehat{+}B\subseteq A\widehat{+}B$ and $\{a_1, a_3\}\widehat{+}B\subseteq A\widehat{+}B$, we have $$\{a_1, a_3\}\widehat{+}B\subseteq\{a_2\}\widehat{+}B=A\widehat{+}B.$$ Through Theorem 2 we know that $|B|\ge 3$, for $|B|=2$ indicates that $|A|=2$. Thus, $b_{n-1}\notin \{b_1, b_n\}$. Using this condition, we have $a_3+b_{n-1}\in A\widehat{+}B=\{a_2\}\widehat{+}B.$ This implies that $\exists b_i\in B\ (i\in \mathbb{Z}_+)$ such that $a_3+b_{n-1}=a_2+b_j.$ Since $a_3>a_2$, there must be $b_j>b_{n-1}.$ Thus, $b_j=b_n$. Together with $a_3=b_n$, we have $a_2=b_{n-1}\in B.$ A contradiction! Therefore, $a_2\in B$. Combining with the arguments above, we have $A\subseteq B$. 
\vspace{1pc}

Since $a_1\in B$, we have $|\{a_1\}\widehat{+}B|=|B|-1$. Due to $b_{n-1}\notin \{b_1, b_n\}=\{a_1, a_3\}$, $a_1<b_{n-1}$. So, $a_1+b_n<a_3+b_{n-1}$. Therefore, $a_3+b_{n-1}\in (A\widehat{+}B)\setminus(\{a_1\}\widehat{+}B)$. Consequently, $$|\{a_3+b_{n-1}\}\cup(\{a_1\}\widehat{+}B)|=|B|=|A|+|B|-3=|A\widehat{+}B|.$$
Since $(\{a_3+b_{n-1}\})\cup(\{a_1\}\widehat{+}B)\subseteq A\widehat{+}B$, we have $\{a_3+b_{n-1}\}\cup(\{a_1\}\widehat{+}B)=A\widehat{+}B.$ Apparently, $a_2+b_n\in (A\widehat{+}B)\setminus (\{a_1\}\widehat{+}B)$. With the same argument, there is $\{a_2+b_{n}\}\cup(\{a_1\}\widehat{+}B)=A\widehat{+}B.$ This together with the previous argument above leads to $$\{a_3+b_{n-1}\}\cup(\{a_1\}\widehat{+}B) =\{a_2+b_{n}\}\cup(\{a_1\}\widehat{+}B).$$ Thus, $a_3+b_{n-1}=a_2+b_{n},$ indicating that $a_2=b_{n-1}$.
\vspace{1pc}

Suppose $|B|\ge 4$, then $\exists b_{n-2}\in B\setminus\{b_1, b_{n-1}, b_n\}$. If $a_3+b_{n-2}\in \{a_1\}\widehat{+}B$, then $\exists b_i\in B\setminus\{a_1\}\ (i\in \mathbb{Z}_+)$ such that $a_1+b_i=a_3+b_{n-2}.$ Since $a_1<a_3$, we must have $b_i>b_{n-2}.$ Thus, $b_i=b_n$. This leads to $a_2=b_{n-2}$. A contradiction! Therefore, $a_3+b_{n-2}\notin \{a_1\}\widehat{+}B.$ Combining $$\{a_3+b_{n-2}\}\cup (\{a_1\}\widehat{+}B)\subseteq A\widehat{+}B$$ with $$|\{a_3+b_{n-2}\}\cup (\{a_1\}\widehat{+}B)|=|B|=|A\widehat{+}B|,$$ we have $\{a_3+b_{n-2}\}\cup (\{a_1\}\widehat{+}B)=A\widehat{+}B.$ Recalling that $$\{a_2+b_{n}\}\cup(\{a_1\}\widehat{+}B)=A\widehat{+}B,$$ we can get $a_2+b_{n}=a_3+b_{n-2}$. Thus, $a_2=b_{n-2}$, which is not possible! Therefore, $|B|=3$. This implies that $A=B$.

\end{proof}
\begin{thm}
Let finite sets $A, B\subseteq\mathbb{Z}$ be nonempty sets. If $|A|=4$, then $(A, B)$ is a critical pair if and only if $A=B$ and
$A=\{a, c, a+d, c+d\}.$
\end{thm}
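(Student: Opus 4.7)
My plan is to follow the pattern of Theorems~1--3: dispatch sufficiency by a direct count of pairwise sums, then prove necessity in successive stages by forcing $A\subseteq B$, then $|B|=|A|$, and finally extracting the structure of $A$. For sufficiency, assuming $A=B=\{a,c,a+d,c+d\}$ with four distinct elements (equivalently $d\ne 0$, $a\ne c$, and $|c-a|\ne|d|$), writing out the six pairwise sums exhibits $a+(c+d)=c+(a+d)$ as the only coincidence, which yields $|A\widehat{+}B|=5=|A|+|B|-3$.

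For necessity I would first prove $A\subseteq B$ by the contradiction machinery of Theorem~3, applied four times. If some $a_i\notin B$ then $|\{a_i\}\widehat{+}B|=|B|=|A\widehat{+}B|-1$, so $A\widehat{+}B\setminus(\{a_i\}\widehat{+}B)$ consists of exactly one element. Applied to $i=1$, the three top sums $a_j+b_n$ for $j=2,3,4$ all strictly exceed $a_1+b_n$ and hence lie outside $\{a_1\}\widehat{+}B$ unless $a_j=b_n$; since at most one $a_j$ can equal $b_n$, at least two of these top sums would need to be the unique extra element, a contradiction. The symmetric bottom-end argument (on $a_j+b_1$) gives $a_4\in B$, and once $a_1,a_4\in B$ the slightly more delicate versions of the same argument applied to $a_2$ and $a_3$ complete $A\subseteq B$. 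The extremal comparisons of Theorem~1 then give $a_1=b_1$ and $a_4=b_n$.

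The main obstacle, as I see it, will be showing $|B|=4$. The plan is to exploit the two inclusions $(\{a_1\}\widehat{+}B)\cup\{a_4+b_{n-1}\}\subseteq A\widehat{+}B$ and $(\{a_4\}\widehat{+}B)\cup\{a_1+b_2\}\subseteq A\widehat{+}B$, each of cardinality $|B|$, together with $|A\widehat{+}B|=|B|+1$; comparing with the analogous enlargements built from $a_2+b_n$, $a_3+b_n$ and their bottom-end analogues should pin the interior elements of $B$ to coincide with $a_2$ and $a_3$, giving $a_2=b_{n-1}$ and $a_3=b_2$. Then if $|B|\ge 5$ there exists $b_k\in B\setminus\{b_1,b_2,b_{n-1},b_n\}$, and locating $a_4+b_k$ inside $\{a_1\}\widehat{+}B$ produces a contradiction in the spirit of the $|B|\ge 4$ step of Theorem~3, so $|B|=4$ and $A=B$.

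Finally, with $|A|=|B|=4$ and $|A\widehat{+}A|=5$, the six pairwise sums satisfy the strict chain $a_1+a_2<a_1+a_3<\min\{a_1+a_4,a_2+a_3\}\le\max\{a_1+a_4,a_2+a_3\}<a_2+a_4<a_3+a_4$, so the unique coincidence collapsing six sums to five must be $a_1+a_4=a_2+a_3$. This gives $a_4-a_3=a_2-a_1$, and setting $a=a_1$, $c=a_3$, $d=a_2-a_1$ yields the required form $A=\{a,c,a+d,c+d\}$.
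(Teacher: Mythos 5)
Your proposal is correct in substance, but the necessity half follows a genuinely different route from the paper's. The paper's key move is a single two-sided union: $(\{a_4\}\widehat{+}B)\cup(\{b_1\}\widehat{+}A)\subseteq A\widehat{+}B$ has intersection exactly $\{a_4+b_1\}$, so inclusion-exclusion forces this union to equal $A\widehat{+}B$ \emph{and} forces $a_4\in B$, $b_1\in A$ in one stroke; the symmetric union gives $a_1\in B$, $b_n\in A$, hence $a_1=b_1$, $a_4=b_n$, and listing the two resulting descriptions of $A\widehat{+}B$ in increasing order and matching terms yields $a_2=b_2$, $a_3=b_3=b_{n-1}$, so $n=4$ with essentially no further casework. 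Your plan instead proves $A\subseteq B$ fiber by fiber and then pins $b_2$ and $b_{n-1}$ by separate counting arguments before excluding $|B|\ge 5$. This can be pushed through --- for instance, the three sums $a_2+b_n$, $a_3+b_n$, $a_4+b_{n-1}$ all exceed $a_1+b_n=\max(\{a_1\}\widehat{+}B)$ while only two elements of $A\widehat{+}B$ may escape $\{a_1\}\widehat{+}B$, which forces $a_4+b_{n-1}\in\{a_2+b_n,a_3+b_n\}$ and then $b_{n-1}=a_3$, since $b_{n-1}=a_2$ would leave $a_3\in B$ strictly between $b_{n-1}$ and $b_n$ --- but be aware that your ``slightly more delicate'' steps for $a_2,a_3\in B$ really are delicate: once two of the extremal coincidences hold, only one top sum escapes $\{a_2\}\widehat{+}B$ by the crude ordering argument, so you must additionally trap a sum such as $a_4+b_{n-1}$ inside $\{a_2\}\widehat{+}B$ to manufacture the second escaping element and reach a contradiction. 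Your sufficiency computation and the final identification $a_1+a_4=a_2+a_3$ (the only coincidence the ordering of the six pairwise sums permits) agree with the paper. Net effect: your route is sound and elementary but trades the paper's one structural identity $A\widehat{+}B=(\{a_4\}\widehat{+}B)\cup(\{b_1\}\widehat{+}A)$ for roughly four separate counting arguments.
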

\begin{proof}
We first consider the sufficiency of the conditions. If $A=B$ and
$A=\{a, a+d, c, c+d\}$, then we have $$|A\widehat{+}B|=|\{a+c, 2a+c, 2c+d, a+c+d, a+c+2d\}|=5=|A|+|B|-3.$$ 
\vspace{1pc}

Now, consider the necessity of conditions. Apparently, $$|(\{a_4\}\widehat{+}B)\cup (\{b_1\}\widehat{+}A)|=|\{a_4\}\widehat{+}B|+|\{b_1\}\widehat{+}A|-|(\{a_4\}\widehat{+}B)\cap (\{b_1\}\widehat{+}A)|.$$ Noticing that $(\{a_4\}\widehat{+}B)\cap (\{b_1\}\widehat{+}A)=\{a_4+b_1\},$ we have $$|(\{a_4\}\widehat{+}B)\cup (\{b_1\}\widehat{+}A)|\ge (|A|-1)+(|B|-1)-1=|A\widehat{+}B|.$$ Since $(\{a_4\}\widehat{+}B)\cup (\{b_1\}\widehat{+}A)\subseteq A\widehat{+}B$, there is $(\{a_4\}\widehat{+}B)\cup (\{b_1\}\widehat{+}A)=A\widehat{+}B$. Thus, $|\{a_4\}\widehat{+}B|=|B|-1$ and $|\{b_1\}\widehat{+}A|=|A|-1$. This gives us $a_4\in B$ and $b_1\in A$. Using the same argument, we have $(\{a_1\}\widehat{+}B)\cup (\{b_n\}\widehat{+}A)=A\widehat{+}B$, $a_1\in B$ and $b_n\in A.$ Through the definition of $a_1, a_4, b_1$ and $b_n$, there must be $a_1=b_1$ and $a_4=b_n$. Then, there are 
\begin{align*}
 A\widehat{+}B &= (\{a_4\}\widehat{+}B)\cup (\{b_1\}\widehat{+}A) \\
 &= \{a_4+b_1, a_4+b_2, \cdots, a_4+b_{n-1}\}\cup \{b_1+a_2, b_1+a_3\} \\
 &= \{a_4+b_1, a_4+b_2, \cdots, a_4+b_{n-1}\}\cup \{a_1+a_2, a_1+a_3\}
\end{align*}
and
\begin{align*}
 A\widehat{+}B &= (\{a_1\}\widehat{+}B)\cup (\{b_n\}\widehat{+}A) \\
 &= \{a_1+b_2, a_1+b_3, \cdots, a_1+b_{n}\}\cup \{b_n+a_2, b_n+a_3\}. \\
\end{align*}
Rearrange the elements in $A\widehat{+}B$, and then we have 
\begin{align*}
 A\widehat{+}B &= \{a_1+a_2, a_1+a_3, a_4+b_1, a_4+b_2, \cdots, a_4+b_{n-1}\} \\
 &= \{a_1+b_2, a_1+b_3, \cdots, a_1+b_{n}, a_2+b_n, a_3+b_n\}.
\end{align*}
Notice that $a_1+a_2<a_1+a_3<\cdots<a_4+b_{n-1}$ and $a_1+b_2<a_1+b_3<\cdots<a_3+b_n$, then there must be $a_4+b_{n-1}=a_3+b_n$ and $a_1+a_2=a_1+b_2$. So, we can get $a_2=b_2$, which further indicates that $a_3=b_3$. What's more, $a_4=b_n$ shows that $a_3=b_{n-1}$. As a result, $b_3=b_{n-1}$. Therefore, $$ A=\{a_1, a_2, a_3, a_4\}=\{b_1, b_2, b_3, b_4\}=B.$$
It's now easy for us to find that $A\widehat{+}B=\{a_1+a_4, a_2+a_4, a_3+a_4, a_1+a_2, a_1+a_3\}.$ Since we also have $a_2+a_3\in A\widehat{+}B$, it's apparent that $a_2+a_3=a_1+a_4$. This means $a_3-a_1=a_4-a_2$. By letting $a_3-a_1=d$, $a_1=a$ and $a_2=c$, we have $A=\{a, c, a+d, c+d\}$, which completes the proof.

\end{proof}

\section{The Cases of $\mathbb{Z}$ for $|A|\ge 5$}
\begin{thm}
Let sets $A, B\subseteq\mathbb{Z}$ be nonempty sets. If $|A|\ge 5$, then $(A, B)$ is a critical pair if and only if $(A, B)$ is a standard pair.
\end{thm}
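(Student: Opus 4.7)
The plan is to prove both directions. For sufficiency, if $(A, B)$ is a standard pair then by definition $A = B$ is a standard set, say $A = B = \{\tau + id : i = 0, \ldots, m - 1\}$ with $d \ne 0$; the sums $a + b$ with $a \ne b$ in $A$ take the values $2\tau + kd$ for $k \in \{1, 2, \ldots, 2m - 3\}$, giving exactly $|A| + |B| - 3$ elements, so $(A, B)$ is a critical pair. For the necessity I split the argument into two phases: first establish $A = B$, then show $A$ is an arithmetic progression.

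Phase 1 extends the opening of the proof of Theorem 4. Applying inclusion--exclusion to $(\{a_m\}\widehat{+}B) \cup (\{b_1\}\widehat{+}A) \subseteq A\widehat{+}B$ and to $(\{a_1\}\widehat{+}B) \cup (\{b_n\}\widehat{+}A) \subseteq A\widehat{+}B$ forces $a_1 = b_1$, $a_m = b_n$, and equality in both unions. Writing each union in strictly increasing order gives
\begin{align*}
a_1 + a_2 < \cdots < a_1 + a_m &= a_m + b_1 < a_m + b_2 < \cdots < a_m + b_{n-1}, \\
a_1 + b_2 < \cdots < a_1 + b_n &= a_m + a_1 < a_m + a_2 < \cdots < a_m + a_{m-1}.
\end{align*}
Both enumerate $A\widehat{+}B$, so comparing entry by entry (taking WLOG $m \le n$), the first $m - 1$ positions yield $a_i = b_i$ for $2 \le i \le m$; combined with $a_m = b_n$, if $n > m$ this would give $b_m = b_n$, contradicting distinctness of the $b_i$. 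Hence $m = n$ and $A = B$.

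For Phase 2, set $d_i = a_{i+1} - a_i$ and note that the strictly increasing sequence
\[ a_1 + a_2 < a_1 + a_3 < \cdots < a_1 + a_m < a_2 + a_m < \cdots < a_{m-1} + a_m \]
of length $2m - 3$ already exhausts $A\widehat{+}A$. I run a reverse induction showing $a_2 + a_j = a_1 + a_{j+1}$ for $j = m - 1, m - 2, \ldots, 3$: for the base $j = m - 1$, the sum $a_2 + a_{m-1}$ lies strictly between $a_1 + a_{m-1}$ and $a_2 + a_m$, and the only element of the canonical list in this open interval is $a_1 + a_m$, giving $d_{m-1} = d_1$; for the step, once $a_2 + a_j = a_1 + a_{j+1}$ has been identified, the sum $a_2 + a_{j-1}$ is squeezed between $a_1 + a_{j-1}$ and $a_1 + a_{j+1}$, so must equal $a_1 + a_j$, yielding $d_{j-1} = d_1$. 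This delivers $d_3 = d_4 = \cdots = d_{m-1} = d_1$. To pick up the remaining difference $d_2$, I apply one more squeeze to $a_3 + a_{m-1}$, admissible precisely because the hypothesis $m \ge 5$ ensures $3 \ne m - 1$: it lies strictly between $a_2 + a_{m-1} = a_1 + a_m$ and $a_3 + a_m$, hence equals $a_2 + a_m$, giving $d_{m-1} = d_2$ and therefore $d_2 = d_1$. Thus all $d_i$ coincide and $A$ is an arithmetic progression.

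The main obstacle will be ensuring each squeeze step pins down a unique canonical element rather than a range of candidates, which requires carefully feeding each identified value into the upper bound of the next step. The breakdown of the final squeeze at $m = 4$ (where $3 = m - 1$ makes $a_3 + a_{m-1}$ not a valid restricted sum) is precisely what permits the richer critical-pair structure $\{a, c, a+d, c+d\}$ appearing in Theorem 4, which clarifies why $|A| \ge 5$ is the natural cutoff for the arithmetic-progression conclusion.
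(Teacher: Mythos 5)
Your proof is correct, but the necessity direction follows a genuinely different route from the paper. The paper first proves $A=B$ via a chain of lemmas: it treats the case $B\subseteq A$ by induction on $|A|$ (deleting the maximal element and invoking the $|A|=4$ case of Theorem 3 as the base), and then handles general $A\ne B$ by passing to the pair $(A\cup B,\, A\cap B)$ and observing $(A\cup B)\widehat{+}(A\cap B)\subseteq A\widehat{+}B$. You instead push the two decompositions $A\widehat{+}B=(\{a_m\}\widehat{+}B)\cup(\{b_1\}\widehat{+}A)=(\{a_1\}\widehat{+}B)\cup(\{b_n\}\widehat{+}A)$ --- which the paper only exploits in the $|A|=4$ case --- to arbitrary $m$, and read off $A=B$ by matching the two sorted enumerations of $A\widehat{+}B$ term by term; you then obtain the arithmetic-progression structure by a direct squeezing argument inside the sorted list $a_1+a_2<\cdots<a_1+a_m<a_2+a_m<\cdots<a_{m-1}+a_m$, with no induction and no appeal to the smaller cases. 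I checked the delicate points: the intersection of the two pieces in each decomposition is exactly the singleton $\{a_m+b_1\}$ (resp.\ $\{a_1+b_n\}$), forcing $a_m\in B$, $b_1\in A$, etc.; each squeeze interval contains exactly one element of the canonical list; and the final squeeze on $a_3+a_{m-1}$ is a legitimate restricted sum precisely because $m\ge 5$. What the paper's longer route buys is Lemma 6 as a free-standing stability statement ($A\ne B$ implies $|A\widehat{+}B|\ge|A|+|B|-2$ for all $|A|,|B|\ge 2$), which its concluding remarks lean on; what your route buys is brevity, independence from Theorems 1--3, and a transparent explanation of why $|A|\ge 5$ is the exact threshold at which the exceptional structure $\{a,c,a+d,c+d\}$ disappears.
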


\begin{proof}
We still first consider the sufficiency of the conditions. Since $(A, B)$ is a standard pair, we may assume that $$A=B=\{a_1, a_2,\cdots, a_m\}=\{a+id: i=0, 1,\cdots, m-1\}$$ for some $a, d\in \mathbb{Z}_{+}$. Define $A'=B'=\{id: i=0, 1, \cdots, m-1\}$. For $\forall x+y\in A\widehat{+}B$, there is $(x-a)+(y-a)\in A'\widehat{+}B'$. Similarly, for $\forall u+w\in A'\widehat{+}B'$, there is $(u+a)+(w+a)\in A\widehat{+}B$. So, $|A\widehat{+}B|=|A'\widehat{+}B'|$. Now, according to the definition of sumset and restricted sumset, we have $$A'\widehat{+}B'\subseteq A'+B'$$ and $$|A'+B'|=|\{id: i=0, 1,\cdots, 2m-2\}|=|A'|+|B'|-1\ge |A'\widehat{+}B'|. $$ Apparently, there are $0\notin A'\widehat{+}B'$ and $(2m-2)d\notin A'\widehat{+}B'$. Thus, $A'\widehat{+}B'\subseteq \{id: i=1,\cdots, 2m-3\}$. For $\forall x\in \left[1, m-1 \right ] \cap\mathbb{Z}$, we can choose $0\in A'$ and $xd\in B'$. Since $0\ne xd$, we have $xd=0+xd\in A'\widehat{+}B'$. For  $\forall x\in \left[m, 2m-3 \right ] \cap\mathbb{Z}$, we can choose $(m-1)d\in A'$ and $(x-m+1)d\in B'$. Since $x\in \left[m, 2m-3 \right ] \cap\mathbb{Z}$, $$(x-m+1)d<(m-1)d.$$ Therefore, we have $xd=(m-1)d+(x-m+1)d\in A'\widehat{+}B'$. So, for $\forall x\in \left[1, 2m-3 \right ]$, $xd\in A'\widehat{+}B'.$ Combining with $A'\widehat{+}B'\subseteq \{id: i=1,\cdots, 2m-3\}$, we have $A'\widehat{+}B'=\{id: i=1,\cdots, 2m-3\}$. As a result, $$|A\widehat{+}B|=|A'\widehat{+}B'|=|\{id: i=1,\cdots, 2m-3\}|=2m-3=|A|+|B|-3.$$
\vspace{1pc}

To prove the necessity of the conditions, let's consider the following lemmas. 

\begin{lem}
If finite sets $A, B\subseteq\mathbb{Z}$ are nonempty sets, then $|A+B|\ge |A|+|B|-1.$

\end{lem}
\begin{proof}
 Since $A$ and $B$ are finite sets, define the element with the largest absolute value in $A$ as $a_x$ and the element with the largest absolute value in $B$ as $b_y$. Apparently, $\exists p$ such that $p$ is a prime number and  $p>\max\{a_m, b_n, |a_x|+|b_y| , |A|+|B|-1\}$. Thus, applying the Cauchy-Davenport theorem in $\mathbb{Z}/p\mathbb{Z}$ \cite{Ca1813}, we have $|A+B|\ge \min\{p, |A|+|B|-1\}=|A|+|B|-1.$

\end{proof}

\begin{lem}
If finite sets $A, B\subseteq\mathbb{Z}$ are nonempty sets, then $|A\widehat{+}B|\ge |A|+|B|-3.$

\end{lem}
\begin{proof}
 Similar with Lemma 1, since $A$ and $B$ are finite sets, define the element with the largest absolute value in $A$ as $a_x$ and the element with the largest absolute value in $B$ as $b_y$. Apparently, $\exists p$ such that $p$ is a prime number and  $p>\max\{a_m, b_n, |a_x|+|b_y| , |A|+|B|-3\}$. Thus, applying the general case of the Erdős-Heilbronn conjecture in $\mathbb{Z}/p\mathbb{Z}$ \cite{Na1995}, we have $|A\widehat{+}B|\ge \min\{p, |A|+|B|-3\}=|A|+|B|-3.$

\end{proof}

\begin{lem}$(A, B)$ is a critical pair and $|A|\ge 5$. If $B\subseteq A$, then $a_m=b_n$ and $(X, Y)$ is also a critical pair, where $X=A\setminus\{a_m\}$ and $Y=B\setminus\{a_n\}$.

\end{lem}

\begin{proof}
 Since $B\subseteq A$, we have $a_m\ge b_n$. Notice that $(A\widehat{+}\{b_1\})\cap(\{a_m\}\widehat{+}B)=\{a_m+b_1 \}$. Since $$(A\widehat{+}\{b_1\})\cup(\{a_m\}\widehat{+}B)\subseteq A\widehat{+}B,$$ we have  $$|A\widehat{+}B|\ge |(A\widehat{+}\{b_1\})\cup(\{a_m\}\widehat{+}B)|=|A\widehat{+}\{b_1\}|+|\{a_m\}\widehat{+}B|-1.$$ If $a_m>b_n$, then $a_m\notin B$. Thus, $$|A\widehat{+}\{b_1\}|+|\{a_m\}\widehat{+}B|-1=(|A|-1)+|B|-1=|A|+|B|-2.$$ A contradiction! Therefore, $a_m=b_n$.
\vspace{1pc}
 
Apparently, 
\begin{align*}
|(X\widehat{+}Y)\cap(\{a_m\}\widehat{+}A)| &= |(X\widehat{+}Y)\cap((\{a_m\}\widehat{+}A)\setminus\{a_{m-1}+a_m\})| \\
 &\le |(\{a_m\}\widehat{+}A)\setminus\{a_{m-1}+a_m\}| \\
 &= |A|-2.
\end{align*}
Now, because $B\subseteq A$, we have $A\widehat{+}B=(X\widehat{+}Y)\cup(\{a_m\}\widehat{+}A).$ Thus, according to Lemma 2,
\begin{align*}
|(X\widehat{+}Y)\cap(\{a_m\}\widehat{+}A)| &= |X\widehat{+}Y|+|\{a_m\}\widehat{+}A|-|(X\widehat{+}Y)\cup(\{a_m\}\widehat{+}A)| \\
 &= |X\widehat{+}Y|+|\{a_m\}\widehat{+}A|-|A\widehat{+}B| \\
 &\ge (|X|+|Y|-3)+(|A|-1)-(A|+|B|-3) \\
 &= |A|-3
\end{align*}
Thus, $|(X\widehat{+}Y)\cap(\{a_m\}\widehat{+}A)|\in \{|A|-2, |A|-3 \}.$
\vspace{1pc}
 
 If $|(X\widehat{+}Y)\cap(\{a_m\}\widehat{+}A)|=|A|-2$, then $$(X\widehat{+}Y)\cap(\{a_m\}\widehat{+}A)=(\{a_m\}\widehat{+}A)\setminus\{a_{m-1}+a_m\}.$$ This implies that for $\forall a_i\in X\setminus\{a_{m-1}\}\ (i\in\mathbb{Z}_+)$, $\exists x\in X$ and $\exists y\in Y$ with $x\ne y$ such that
 $a_m+a_i=x+y$. By taking $a_i=a_{m-2}$, we have $$a_m+a_{m-2}=x+y.$$ Since $a_m>\max\{x, y\}$, then we must have $x>a_{m-2}$ and $y>a_{m-2}$. Thus, $x=y=a_{m-1}$. A contradiction! Consequently, $|(X\widehat{+}Y)\cap(\{a_m\}\widehat{+}A)|=|A|-3.$
\vspace{1pc}

Through the following computation,
\begin{align*}
 |X\widehat{+}Y| &= |(X\widehat{+}Y)\cup(\{a_m\}\widehat{+}A)|-|\{a_m\}\widehat{+}A)|+|(X\widehat{+}Y)\cap(\{a_m\}\widehat{+}A)|  \\
 &= |A\widehat{+}B|-(|A|-1)+(|A|-3) \\
 &= (|A|+|B|-3)-(|A|-1)+(|A|-3) \\
 &= (|A|-1)+(|B|-1)-3 \\
&= |X|+|Y|-3,
\end{align*}
we complete the proof that $(X, Y)$ is a critical pair.

\end{proof}

\begin{lem}Let finite sets $A, B\subseteq \mathbb{Z}$ be nonempty sets. $(A, B)$ is a critical pair and $|A|\ge 5$. If $B\subseteq A$, then $(A, B)$ is a standard pair.

\end{lem}
\begin{proof}
Let's use induction on $|A|$ to prove Lemma 4.
\vspace{1pc}

When $|A|=5$, define $X=A\setminus\{a_5\}$ and $Y=B\setminus\{b_n\}.$ According to Lemma 3, $a_5=b_n$ and $(X,Y)$ is a critical pair. Since $|X|=4$, according to Theorem 3, we have $X=Y=\{a_1, a_2, a_3, a_4\}$ and $X\widehat{+}Y=\{a_1+a_4, a_2+a_4, a_3+a_4, a_1+a_2, a_1+a_3\}.$ Apparently, $A=B=\{a_1, a_2, a_3, a_4, a_5\}$. From the proof of Lemma 3, we know that 
$$|(X\widehat{+}Y)\cap({a_5}\widehat{+}A)|=|A|-3=2.$$ It's easy for us to observe that $a_2+a_4=a_5+a_1$ and $a_3+a_4=a_5+a_2$. By the proof of Theorem 3, we also have $a_2+a_3=a_1+a_4$. Consequently, $$a_5-a_4=a_4-a_3=a_3-a_2=a_2-a_1.$$ Thus, $(A, B)$
is a standard pair.
\vspace{1pc}

Now, suppose Lemma 4 is true for $|A|\le m-1\ (m\in\mathbb{Z}, m>5)$. Let's consider the case when $|A|=m$. Note $A'=A\setminus\{a_m\}$ and $B'=B\setminus\{b_n\}$, then $(A', B')$
is a critical pair and $a_m=b_n$ according to Lemma 3. Thus, by the induction hypothesis, $(A', B')$
is a standard pair. Without loss of generality, for some $\tau$, $d\in\mathbb{Z}$, we may assume that  $$A'=B'=\{\tau, \tau+d, \tau+2d, \cdots, \tau+(m-2)d\}$$ with $\tau<\tau+d<\tau+2d<\cdots< \tau+(m-2)d$. 
\vspace{1pc}

From the proof of Lemma 3, we know that $$|(A'\widehat{+}B')\cap({a_m}\widehat{+}A)|=|A|-3=|{a_m}\widehat{+}A|-2.$$ Thus, for $\forall u\in A'\setminus\{\tau+(m-3)d, \tau+(m-2)d\}$, $\exists v\in A', w\in B'$ such that $$a_m+u=v+w.$$ When $u=\tau+(m-4)d$, $\max\{v, w\}\ge \tau+(m-3)d$. Without loss of generality, assume that $v\ge w$. Thus, $v\in\{\tau+(m-3)d, \tau+(m-2)d\}.$
\vspace{1pc}

If $v=\tau+(m-3)d$, then $a_m=w+d$. From the definition of $a_m$, we know that $a_m>\tau+(m-2)d$. Because $w\le \tau+(m-2)d$, we must have $w=\tau+(m-2)d$. Consequently, $a_m=\tau+(m-1)d=b_n.$
\vspace{1pc}

If $v=\tau+(m-2)d$, then $a_m=w+2d$. According to the definition of $a_m$, we know that $a_m>\tau+(m-2)d$. Thus, since $w\le \tau+(m-2)d$, $w\in \{\tau+(m-3)d, \tau+(m-2)d\}$. Note that $w\ne v$, then there must be $w=\tau+(m-3)d.$ Therefore, $a_m=\tau+(m-1)d=b_n.$
\vspace{1pc}

As a result, we always have $$A=B=\{\tau, \tau+d, \tau+2d, \cdots, \tau+(m-2)d, \tau+(m-1)d\}$$ with $\tau<\tau+d<\tau+2d<\cdots<\tau+(m-2)d<\tau(m-1)d$. This completes our induction.
\end{proof}

\begin{lem}
Let finite sets $A, B\subseteq \mathbb{Z}$ be nonempty sets. If $|A|, |B|\ge 2$ and $B\subsetneqq A$, then $|A\widehat{+}B|\ge |A|+|B|-2.$
\end{lem}
\begin{proof}
Since $B\subsetneqq A$, according to Lemma 2, we have $$|A\widehat{+}B|\ge|A|+|B|-3. $$ According to Lemma 4, if $|A\widehat{+}B|=|A|+|B|-3$, then $A=B$. However, this contradicts with $B\subsetneqq A$. Thus, $$|A\widehat{+}B|>|A|+|B|-3.$$ Therefore, we have $|A\widehat{+}B|\ge|A|+|B|-2$.
\end{proof}

\begin{lem}
Let finite sets $A, B\subseteq \mathbb{Z}$ be nonempty sets. If $|A|, |B|\ge 2$ and $A\neq B$, then $|A\widehat{+}B|\ge |A|+|B|-2.$
\end{lem}
\begin{proof}
Using Lemma 5, we only need to consider the case when $A\neq B$ and $B\not\subset A$. Without loss of generality, assume $|A|\ge |B|$. If $|A\cap B|=0$, then according to Lemma 1, we have $$|A\widehat{+}B|=|A+B|\ge|A|+|B|-1>|A|+|B|-2. $$ If $|A\cap B|=1$, then note $X=A\cup B$. According to Lemma 1, we have $$|A\widehat{+}B|\ge |A\widehat{+}(B\setminus X)|=|A+(B\setminus X)|\ge |A|+|B\setminus X|-1\ge |A|+|B|-2.$$ Thus, we only need to consider the case when $|A\cup B|\ge 2$. Since $B\not\subset A$, we have $A\cap B\subsetneqq A\cup B$. Moreover, $A\cap B$, $A\cup B\in \mathbb{Z}$ and $|A\cup B|\ge 2$. Thus, according to Lemma 5, we have $$|(A\cup B)\widehat{+}(A\cap B)|\ge |A\cup B|+|A\cap B|-2=|A|+|B|-2.$$ Notice that for any $x\neq y$ with $x\in A\cup B$
and $y\in A\cap B$, if $x\in A$, then $y\in B$. Thus, $x+y\in A\widehat{+}B$. Similarly, if $x\in B$, then $y\in A$ and thus $x+y\in A\widehat{+}B$. As a result, $$(A\cup B)\widehat{+}(A\cap B)\subseteq A\widehat{+}B.$$ Therefore, $|A\widehat{+}B|\ge |(A\cup B)\widehat{+}(A\cap B)|\ge |A|+|B|-2$.

\end{proof}
Back to the main problem, if $(A, B)$ is a critical pair, Lemma 6 indicates that $A=B$. Thus, we have $B\subseteq A$. According to Lemma 4, $(A, B)$ is a standard pair when $|A|\ge 5$. This finishes our proof for the sufficiency of the conditions. 
\end{proof}
\vspace{1pc}

\section{Partial Results in $\mathbb{Z}/p\mathbb{Z}$}

\begin{thm}
Let sets $A, B\subseteq \mathbb{Z}/p\mathbb{Z}$ be nonempty sets such that $p\ge |A|+|B|-2$ and $a_m+b_n<p$. Then $|A\widehat{+}B|=|A|+|B|-3$ if and only if $A=B$ and one of the following holds:
\begin{enumerate}
    \item $|A|=2$ or $|A|=3$;
    \item $|A|=4$ and $A=\{a, a+d, c, c+d\}$;
    \item $|A|\ge 5$ and $A$ is an arithmetic progression.
\end{enumerate}
\end{thm}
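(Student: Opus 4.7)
The plan is to reduce Theorem 5 directly to the $\mathbb{Z}$-versions established in Theorems 1--4 via the canonical lift $\mathbb{Z}/p\mathbb{Z}\cong\{0,1,\ldots,p-1\}\subset\mathbb{Z}$. The hypothesis $a_m+b_n<p$ is the key: for every $(a,b)\in A\times B$ with $a\ne b$ we have $0\le a+b\le a_m+b_n<p$, so reduction modulo $p$ is the identity on every such sum. Consequently the restricted sumset $A\widehat{+}B$ coincides, as a set of integers, whether computed in $\mathbb{Z}/p\mathbb{Z}$ or in $\mathbb{Z}$, and therefore has the same cardinality in both settings.

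For the necessity direction I would assume $|A\widehat{+}B|=|A|+|B|-3$ in $\mathbb{Z}/p\mathbb{Z}$, transfer this equality to $\mathbb{Z}$ via the lift, and conclude that $(A,B)$ is a critical pair in $\mathbb{Z}$. Since the notion of critical pair is unordered, I may assume $|A|\le|B|$ and split on $|A|$: Theorem 1 disposes of $|A|=2$, Theorem 2 of $|A|=3$, Theorem 3 of $|A|=4$, and Theorem 4 of $|A|\ge 5$. In every case the cited theorem forces $A=B$ (so in particular $|A|=|B|$) and pins down the structural form of $A$ exactly as prescribed in cases (1), (2), (3) of Theorem 5.

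For the sufficiency direction I would take $A=B$ in one of the three listed forms, apply the sufficiency halves of Theorems 1--4 to obtain $|A\widehat{+}B|=|A|+|B|-3$ in $\mathbb{Z}$, and then transfer this back to $\mathbb{Z}/p\mathbb{Z}$ via the same lift. The auxiliary hypothesis $p\ge|A|+|B|-2$ enters only to the extent that it guarantees $|A|+|B|-3<p$, so that the target cardinality is admissible in the ambient group $\mathbb{Z}/p\mathbb{Z}$ rather than being truncated by the $\min\{p,\cdot\}$ appearing in the Erd\H{o}s--Heilbronn lower bound.

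Because the whole argument is a direct transfer along the lift, I anticipate no substantive obstacle; the only step that truly requires attention is the verification that each sum in $A\widehat{+}B$ lies in $[0,p)$, which is precisely what $a_m+b_n<p$ ensures. The price of this simplicity is that Theorem 5 covers only the wrap-around-free slice of the inverse problem in $\mathbb{Z}/p\mathbb{Z}$; presumably the general case, attacked via Theorems 6 and 7, is where genuinely new input becomes necessary.
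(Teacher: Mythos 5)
Your proposal is correct and is essentially the paper's own argument: the hypothesis $a_m+b_n<p$ eliminates wrap-around, so the restricted sumset computed in $\mathbb{Z}/p\mathbb{Z}$ coincides with the one computed in $\mathbb{Z}$, and the result follows from Theorems 1--4. The paper states this reduction in the same one-step fashion, so there is nothing further to compare.
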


\begin{proof}

Since $a_m+b_n<p$, we have $a+b<p$ for $\forall a\in A, b\in B$. This leads to 
$$\{a+b: a\in A, b\in B, a\ne b\}=\{a+b \mod{p}: a\in A, b\in B, a\ne b\}.$$
This equation here indicates that the proof is equivalent to the case in $\mathbb{Z}$, which we have completed.

\end{proof}

\begin{thm}
Let sets $A, B\subseteq \mathbb{Z}/p\mathbb{Z}$ be nonempty sets such that $p\ge|A|+|B|$, where $p$ is a prime. Suppose $|A|\ge 5$ and $(A, B)$ is a critical pair. Given $d$ as an arbitrary generator of $\mathbb{Z}/p\mathbb{Z}$, then length of the longest gap in $B$
with respect to $d$ is not less than $|A|$.
\end{thm}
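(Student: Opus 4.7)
The plan is to argue by contradiction. Suppose that for some generator $d$ of $\mathbb{Z}/p\mathbb{Z}$ the longest gap in $B$ with respect to $d$ has length at most $|A|-1$. Since $p$ is prime, $d$ is invertible and the dilation $\sigma(x)=d^{-1}x$ is an additive group automorphism of $\mathbb{Z}/p\mathbb{Z}$. This $\sigma$ preserves the restricted sumset size (one has $|\sigma(A)\widehat{+}\sigma(B)|=|A\widehat{+}B|$), so $(\sigma(A),\sigma(B))$ is again a critical pair, and it transforms gaps of $B$ with respect to $d$ bijectively into gaps of $\sigma(B)$ with respect to the generator $1$. Replacing $(A,B)$ by $(\sigma(A),\sigma(B))$, I may assume $d=1$. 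The hypothesis now reads: every cyclic window of $|A|$ consecutive residues in $\mathbb{Z}/p\mathbb{Z}$ meets $B$; equivalently, with $T=\{0,1,\ldots,|A|-1\}$, $B-T=\mathbb{Z}/p\mathbb{Z}$.

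Propagating this covering to the restricted sumset, I add $A$ on the left and obtain $(A+B)-T=\mathbb{Z}/p\mathbb{Z}$. Every $x\in\mathbb{Z}/p\mathbb{Z}$ admits a representation $x=a+b-t$ with $a\in A$, $b\in B$, $t\in T$; splitting into the cases $a\ne b$ and $a=b$ gives the decomposition
\[
\mathbb{Z}/p\mathbb{Z}\;=\;\bigl((A\widehat{+}B)-T\bigr)\;\cup\;\bigl(2(A\cap B)-T\bigr).
\]
Cauchy--Davenport applied to the first summand, combined with the critical pair hypothesis $|A\widehat{+}B|=|A|+|B|-3$, gives $|(A\widehat{+}B)-T|\ge\min(p,\,2|A|+|B|-4)$, while the crude product bound yields $|2(A\cap B)-T|\le|A\cap B|\cdot|A|$.

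To close the contradiction I would bound $|A\cap B|$ and compare the two estimates with $p$. Criticality together with the standard inequality $|A+B|-|A\widehat{+}B|\le|A\cap B|$ gives $|A\cap B|\ge 2$ automatically; an upper bound on $|A\cap B|$ I expect to extract by applying Vosper's theorem to $A+B$. In the regime $|A+B|=|A|+|B|-1$, Vosper makes $A$ and $B$ arithmetic progressions with a common difference $d_0$, and I plan to argue that the hypothesis $p\ge|A|+|B|$ together with the short-gap assumption on $B$ with respect to the normalized direction $1$ is incompatible with this AP structure for every permissible $d_0$. In the complementary regime $|A+B|\ge|A|+|B|$, the displayed covering forces both Cauchy--Davenport bounds to be close to saturated, and a refined count using the overlap between $(A\widehat{+}B)-T$ and $2(A\cap B)-T$ together with $p\ge|A|+|B|$ should yield the contradiction. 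The principal obstacle is precisely this Vosper case: when $A=B$ is an arithmetic progression whose own common difference is unrelated to $1$, its natural $d=1$ gap may be much smaller than $p-|B|$, and recovering a gap of length at least $|A|$ will require a delicate analysis of the multiplicative geometry of the AP structure inside $\mathbb{Z}/p\mathbb{Z}$.
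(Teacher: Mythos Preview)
Your plan has a fundamental gap that stems from a misreading of the hypothesis. The paper's proof opens with ``Let $A=\{\tau+id:\,i=0,1,\ldots,|A|-1\}$'': despite the awkward phrase ``arbitrary generator'', the intended assumption is that $A$ is a standard set and $d$ is its common difference (this is exactly how Theorem~7 invokes Theorem~6). Under your reading---$d$ an arbitrary generator and $A$ an arbitrary set---the assertion is simply false. For instance, take $p=11$ and $A=B=\{0,1,2,3,4\}$; then $(A,B)$ is critical with $|A\widehat{+}B|=7=|A|+|B|-3$, yet with respect to $d=2$ the index set of $B$ is $\{0,1,2,6,7\}$ and the longest gap has length $3<5=|A|$. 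Since you never invoke that $A$ is an arithmetic progression in the direction $d$, your covering set $T=\{0,1,\ldots,|A|-1\}$ is an auxiliary object unrelated to $A$, and the decomposition $\mathbb{Z}/p\mathbb{Z}=((A\widehat{+}B)-T)\cup(2(A\cap B)-T)$ cannot be closed into a contradiction---the Vosper ``obstacle'' you identify is real and unavoidable on this route, because the counterexample above sits squarely in that case.

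Once one uses the correct hypothesis, the paper's argument is far more direct than your outline: after translating so that $A=\{0,d,\ldots,(|A|-1)d\}$, one has $B\setminus\{0\}\subseteq A\widehat{+}B$ immediately, and then one shows by an explicit block-by-block analysis (partitioning $B$ into maximal runs of consecutive multiples of $d$) that every element of $\mathbb{Z}/p\mathbb{Z}$ lies in $A\widehat{+}B$ with at most two exceptions. This yields $|A\widehat{+}B|\ge p-2\ge|A|+|B|-2$, contradicting criticality. No Cauchy--Davenport, no Vosper, and no bound on $|A\cap B|$ are needed; the whole argument is a hands-on case check exploiting that $A$ is an interval in the $d$-ordering.
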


\begin{proof}

Let $A=\{\tau+id: i=0, 1,\cdots, |A|-1\}$ and $B=\{x_id: i=0, 1,\cdots, |B|-1\}$, where $\tau\in \mathbb{Z}/p\mathbb{Z}$ and $x_i\in \mathbb{Z}_+\cup\{0\}$. Define the length of the longest gap in $B$ with respect to $d$ as $t$ and suppose $t\le |A|-1$.
\vspace{1pc}

Without loss of generality, assume $\tau=0$, or else use $$A'=\{id: i=0, 1,\cdots, |A|-1\}$$ and $$B'=\{x_id-\tau: i=0, 1,\cdots, |B|-1\}$$ to represent $A$ and $B$. Since $0\in A$, apparently $B\setminus\{0\}\subseteq A\widehat{+}B$.
\vspace{1pc}

Define sets $B_i\subseteq B$ as the following:
$$B_i=\{m_id, (m_i+1)d, \cdots, \widehat{m_i}d\}\ (m_i\le \widehat{m_i}), $$
where $m_{i+1}-\widehat{m_i}\ge 1$. By definition, we can get a partition of $B$, where  $I$ a finite set of positive integers and $B=\sqcup_{i\in I} B_i$. Note $\mathfrak{B}_1=\{B_i: \widehat{m_i}>t\}$ and $\mathfrak{B}_2=\{B_i: \widehat{m_i}\le t\}=\{B_1, B_2, \cdots, B_k\}\ (1\le k\le |B|)$.
\vspace{1pc}

We first claim that for $\forall g\in \mathbb{Z}_+\cup\{0\}$, $gd\in A\widehat{+}B$ if there $\exists j\in I$ such that $\widehat{m_j}<g<m_{j+1}$ and $\widehat{m_j}>\widehat{m_k}$.
\vspace{1pc}

In particular, if $0\le g<m_1$ then $gd\in A\widehat{+}B$. Observing that $g-\widehat{m_j}<m_{j+1}-\widehat{m_j}\le t+1\le |A|,$ we have $g-\widehat{m_j}\le |A|-1$ and thus $(g-\widehat{m_j})d\in A$. Since $\widehat{m_j}>\widehat{m_k}$, $\widehat{m_j}>t\ge g-\widehat{m_j}.$ Then $gd=(g-\widehat{m_j})d+\widehat{m_j}d\in A\widehat{+}B.$
\vspace{1pc}

We now proof that for $\forall xd\in A$, if $x\ne 2\widehat{m_i}\ (i=1, 2,\cdots, k)$, then $xd\in A\widehat{+}B.$
\vspace{1pc}

To prove this claim, assume that $\widehat{m_i}<x<m_{i+1}$ for some $i\in I$. Again $$xd=(x-\widehat{m_i})d+\widehat{m_i}d\in A\widehat{+}B,$$ unless $(x-\widehat{m_i})d=\widehat{m_i}d$, which indicates $x=2\widehat{m_i}$.
\vspace{1pc}

For $i=1, 2, \cdots, k-1$, if $|B_i|\ge 2$, then $2\widehat{m_i}d\in A\widehat{+}B$, because $(\widehat{m_i}-1)d\in B$ and $2\widehat{m_i}d=(\widehat{m_i}+1)d+(\widehat{m_i}-1)d$.
\vspace{1pc}

For $i=2, 3 \cdots, k-1$, if there exists $\widehat{m_{i-1}}$ and $m_{i+1}$ such that $\widehat{m_{j-1}}<\widehat{m_i}<m_{i+1}$, then we claim that $2\widehat{m_i}d\in A\widehat{+}B.$ 
\vspace{1pc}

Note that $0\le \widehat{m_{i-1}}<\widehat{m_i}<m_{i+1}$. If $2\widehat{m_i}<m_{i+1}$, then $$m_{i+1}-\widehat{m_i}>\widehat{m_i}\ge \widehat{m_i}-\widehat{m_{i-1}}.$$ So, $2\widehat{m_i}-\widehat{m_{i-1}}<m_{i+1}\le \widehat{m_{i+1}}\le t\le |A|-1$. Thus, $(2\widehat{m_i}-\widehat{m_{i-1}})d\in A.$ This further leads to $2\widehat{m_i}d=(2\widehat{m_i}-\widehat{m_{i-1}})d+\widehat{m_{i-1}}d\in A\widehat{+}B.$ Similarly, if $2\widehat{m_i}\ge m_{i+1}$, then we have $2\widehat{m_i}-m_{i+1}\in A$. Thus,
$2\widehat{m_i}d=(2\widehat{m_i}-\widehat{m_{i+1}})d+\widehat{m_{i+1}}d\in A\widehat{+}B.$
\vspace{1pc}

Combining all these claims, we find the only possible elements in $\mathbb{Z}/p\mathbb{Z}$
that are not in $A\widehat{+}B$:
\begin{enumerate}
    \item $\widehat{m_1}$, if $\widehat{m_1}=0$;
    \vspace{1pc}
    
    \item $2\widehat{m_1}$;
    \vspace{1pc}
    
    \item $2\widehat{m_k}.$
\end{enumerate}
Apparently, the first two cases can not happen together at the same time unless $\widehat{m_1}=2\widehat{m_1}=0.$ So, at most two elements in $\mathbb{Z}/p\mathbb{Z}$  are not in  $A\widehat{+}B.$ This gives $|A\widehat{+}B|\ge p-2\ge |A|+|B|-2.$ A contradiction! Thus, we must have $t\ge |A|.$

\end{proof}

\begin{thm}
Let sets $A, B\subseteq \mathbb{Z}/p\mathbb{Z}$ be nonempty sets such that $p\ge|A|+|B|$, where $p$ is a prime. Suppose $|A|\ge 5$ and $(A, B)$ is a critical pair. If $A$
is a standard set, then $(A, B)$ is a standard pair.
\end{thm}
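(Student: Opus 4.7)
The plan is to normalize $A$ to a standard integer interval, invoke Theorem 6 to control the gap structure of $B$, and then reduce to Theorem 5 via a simultaneous translation of both sets. Since $A$ is a standard set, write $A = \{\tau + id : 0 \le i \le m-1\}$ for some $\tau \in \mathbb{Z}/p\mathbb{Z}$ and generator $d$. Because $p$ is prime, $d$ is invertible in $\mathbb{Z}/p\mathbb{Z}$, so the additive automorphism $\phi(x) = d^{-1}(x-\tau)$ sends $A$ to $\{0, 1, \ldots, m-1\}$. As $\phi$ is a group automorphism, it preserves restricted sumsets, critical pairs, and the standard-pair condition, so we may assume $A = \{0, 1, \ldots, m-1\}$. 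Applying Theorem 6 with the generator $d = 1$ then supplies a gap $[g, g+t-1]$ in $B$ of length $t \ge m$, which I call the big gap.

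The next step is to shift both $A$ and $B$ by a common constant $c$ so as to place the pair inside the hypothesis of Theorem 5, namely $\max(A - c) + \max(B - c) < p$ with representatives chosen in $\{0, 1, \ldots, p-1\}$. The natural choice is $c \equiv g + t \pmod{p}$, which places the big gap of $B' := B - c$ at the top of the interval, giving $B' \subseteq \{0, 1, \ldots, p - t - 1\}$ and hence $\max B' \le p - t - 1$. Whenever $c \in \{0\} \cup [m, p-1]$, the shifted set $A' := A - c$ is also a non-wrapping integer interval with $\max A' \le \max(m-1,\, p - c + m - 1)$, and a direct check using $t \ge m$ together with $p \ge |A| + |B|$ gives $\max A' + \max B' < p$. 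Theorem 5 then applies to the critical pair $(A', B')$: since $|A'| = m \ge 5$, it forces $A' = B'$ to be an arithmetic progression, and translating back shows $A = B$ is a standard pair.

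The main obstacle is the wrap-around case in which $c = (g+t) \bmod p$ falls in $[1, m-1]$, so that $A'$ wraps across $p-1$ and the naive shift no longer meets the hypothesis of Theorem 5. This corresponds to a narrow configuration in which the big gap of $B$ straddles $0$ in a specific way, and consequently $B$ itself is contained in a small integer sub-interval of $\{0, 1, \ldots, p-1\}$. To resolve it, I would exploit the critical-pair condition more finely: the identity $c \notin A + B \iff B \cap [c - m + 1, c] = \emptyset$ gives $|A + B| = p - (t - m + 1)$, which combined with $|A \widehat{+} B| \ge |A + B| - |A \cap B|$ forces $|A \cap B| \ge p - t - n + 2$, while each bad element of the form $2b$ imposes the rigid constraint $B \cap [2b - m + 1, 2b] = \{b\}$. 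These combined constraints either rule out a critical pair in this regime or provide an alternative admissible translation (placing $A'$ at the low end of $\{0, 1, \ldots, p-1\}$ while $B'$ fits below $p - m$) that still satisfies the hypothesis of Theorem 5, closing the argument.
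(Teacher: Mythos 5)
Your route --- normalize $A$ to $\{0,1,\dots,m-1\}$, invoke Theorem 6 for the long gap, then translate both sets by a common $c$ and hand the pair to Theorem 5 --- is genuinely different from the paper's proof, which never returns to Theorem 5: there the gap is used directly in $\mathbb{Z}/p\mathbb{Z}$ to force the decomposition $A\widehat{+}B=(\{(u-1)d\}\widehat{+}A)\cup(\{0\}\widehat{+}B)$, from which one propagates $wd\in B\Rightarrow (w+1)d\in B$ off the gap boundary to conclude that $B$ is a standard set, and only then shows $A=B$. The trouble is that your reduction breaks at its central step. With $c=g+t$ you correctly get $\max B'=p-t-1$ (since $g-1\in B$ by maximality of the gap), so Theorem 5 needs $\max A'\le t$. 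But for $c\in[m,p-1]$ you have $A'=\{p-c,\dots,p-c+m-1\}$ and $\max A'=p-c+m-1$, so the requirement $\max A'+\max B'<p$ is equivalent to $g>p+m-2-2t$. This is far from automatic and is \emph{not} implied by $t\ge m$ and $p\ge|A|+|B|$: it fails whenever the long gap sits in the interior of $[m,p-1]$ rather than abutting $p-1$. Concretely, with $p=101$, $A=\{0,\dots,4\}$, and $B$ of size $90$ whose longest gap is $[50,54]$ (so $t=5=m$, exactly the bound Theorem 6 guarantees), you get $c=55\in[m,p-1]$, $\max A'=50$, $\max B'=95$, and the sum is $145>p$. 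So the ``direct check'' is false for values of $c$ well inside the range $\{0\}\cup[m,p-1]$ that you declare safe; the problematic regime is not confined to the wrap-around window $c\in[1,m-1]$ but is the generic situation.

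The closing paragraph does not repair this. The identity $|A+B|=p-(t-m+1)$ presumes the long gap is the \emph{only} gap of $B$ of length $\ge m$, which you have not established, and the final sentence (``these combined constraints either rule out a critical pair in this regime or provide an alternative admissible translation'') is a statement of intent rather than an argument --- it is precisely the hard case, and finding an admissible translation essentially requires already knowing that $B$ has a single gap of length $p-|B|$, i.e.\ that $B$ is a standard set, which is the content of the theorem. To salvage the reduction you would have to prove that every critical pair admits some $c$ with $\max(A-c)+\max(B-c)<p$; absent that, the workable path is the paper's: use the long gap to show $(\{(u-1)d\}\widehat{+}A)\cap(\{0\}\widehat{+}B)=\{(u-1)d\}$, deduce from $|A\widehat{+}B|=|A|+|B|-3$ that $A\widehat{+}B$ equals this union, and read off the arithmetic-progression structure of $B$ directly in $\mathbb{Z}/p\mathbb{Z}$.
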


\begin{proof}

Define $A$ and $B$ in the same way as in Theorem 6. According to Theorem 6, the length of the largest gap in $B$ with respect to $d$ is not less than $|A|$. Denote the largest gap as $[u, v]_d$, then $(u-1)d\in B$. $A=\{id: i=0, 1,\cdots, |A|-1\}$ gives $$\{(u-1)d\}\widehat{+}A=\{(u-1+i)d: i=0, 1,\cdots, |A|-1\}\subseteq A\widehat{+}B.$$ Since $0\in A$, $\{0\}\widehat{+}B\subseteq A\widehat{+}B$. According to the definition of $u$, we know that $$(\{(u-1)d\}\widehat{+}A)\cap (\{0\}\widehat{+}B)=\{(u-1)d\}.$$
Observing that 
\begin{align*}
 |A\widehat{+}B| &= |A|+|B|-3  \\
 &\ge |(\{(u-1)d\}\widehat{+}A)\cup (\{0\}\widehat{+}B)| \\
 &= |(\{(u-1)d\}\widehat{+}A)|+|(\{0\}\widehat{+}B)|-1 \\
 &\ge (|A|-1)+(|B|-1)-1 \\
 &= |A|+|B|-3,
\end{align*}
all equalities must hold. Therefore, $\{(u-1)d\}\in A$, $0\in B$ and 
\begin{equation}\label{eqn51}
A\hat{+}B=(\{(u-1)d\}\widehat{+}A)\cup (\{0\}\widehat{+}B). 
\end{equation}

 Define $X=(\{(u-1)d\}\widehat{+}A)\cup (\{0\}\widehat{+}B)$. We can prove that  $(w+1)d\in B$ for $\forall wd\in B$ with $w\ne 1$ and $w\ne (u-1).$ If not, $(w+1)d\in (A\widehat{+}B)\setminus X,$ which is not possible. Applying this argument, we get $d\in B$, for $0\in B$. Similarly, consider $0+2d$, then $0+2d=2d\in B$. So we have $xd\in B$ for $\forall x\notin [u, v]_d$ by repeating the previous argument. In particular, $B$ is a standard set and $B=\{b+td: t=0, 1, \cdots, |B|-1\}$.
\vspace{1pc}

Now, we can apply a similar argument as \cite{Su2013} to prove that $(A, B)$ is a standard pair. We have $A\widehat{+}B=\{sd+(b+td): 0\le s\le |A|-1, 0\le t\le |B|-1, sd\ne b+td\}$. Since $sd+(b+td)=(s\pm 1)d+b+(t\mp 1)d$, then even if $sd=b+td$, we have the sum be written into the sum of two distinct elements from $A$ and $B$ unless $s=t=0$ or $s=|A|-1$ and $t=|B|-1$. Therefore, $A\widehat{+}B=A+B$ unless $b=0$ or $(|A|-1)d=b+(|B|-1)d$. By putting $b'=b+(|B|-1)d$ and forming the arithmetic progression with the common difference $d'=-d$, we can reduce the second case to the first one. So, $A\widehat{+}B=A+B$ unless $b=0$. Since $A+B\ge \min\{p, |A|+|B|-1\}>|A|+|B|-3$, we must have $b=0$.
\vspace{1pc}

Thus, we can write $$A\widehat{+}B=\{sd+td\mod{p}: sd\in A, td\in B, sd\ne td\}.$$ If $sd=td$ for $s\ne t$ (say without the loss of generality that $s>t$), then $$(s-t)d\equiv 0\mod{p}.$$ Because $d\not\equiv 0\mod{p}$, $p\mid (s-t)$. By our definition for $p$, $$|A|\ge (s-t)\ge p>|A|+|B|>|A|,$$ which is a contradiction. We now must conclude that $sd=td$ implies $s=t$.
\vspace{1pc}

Again, for the case where $sd=td$, we can write $$sd+td=(s\pm 1)d+(t\mp 1)d, $$ unless $s, t=0$ or $s=t=|A|-1=|B|-1$. However, if $|A|\ne |B|$, then we only get the case $s=t=0$, which means that $$ |A\widehat{+}B|\ge |A+(B\setminus\{0\})|\ge |A|+(|B|-1)-1=|A|+|B|-2,$$ which is also a contradiction. Therefore, we must have $|A|=|B|$ and this completes the proof.

\end{proof}

\section{Concluding Remarks \& Further Thoughts}
We have given an elegant and completely elementary proof for the inverse Erdős-Heilbronn problem in $\mathbb{Z}$ and give some partial results on the elementary proof for the inverse Erdős-Heilbronn problem in $\mathbb{Z}/p\mathbb{Z}$. Our strategies in $\mathbb{Z}$ need to be further improved to apply to $\mathbb{Z}/p\mathbb{Z}$, for we are considering the equations under the meaning of module $p$ in $\mathbb{Z}/p\mathbb{Z}$. 
\vspace{1pc}

For Theorem 5, we add the technical condition of $a_m+b_n<p$ in order to apply the theorems that we have obtained in $\mathbb{Z}$ to $\mathbb{Z}/p\mathbb{Z}$. For Theorem 6 and Theorem 7, we drop this technical condition and ask $p\ge |A|+|B|$ instead of $p\ge |A|+|B|-2$ in the case of $\mathbb{Z}$ to complete the proof of our results. 

\vspace{1pc}
Through Theorem 7, we improve Theorem 5.1 in \cite{Su2013}, which gives an elementary proof of the inverse Erdős-Heilbronn problem in $\mathbb{Z}/n\mathbb{Z}$ under the condition that elements in sets $A$ and $B$ for arithmetic progressions with the same common difference, by giving an elementary proof in $\mathbb{Z}/p\mathbb{Z}$ that only requires $p\ge |A|+|B|$ and $A$ to be a standard set.
\vspace{1pc}

Moreover, we may very likely guess that an elementary proof in $\mathbb{Z}/p\mathbb{Z}$ can be achieved through an analogy of Vosper's elementary proof \cite{Vo1956} on the inverse Cauchy-Davenport problem.
\vspace{1pc}

In addition, through of elementary proof in the case of $\mathbb{Z}$, we give an explicit description of $A\widehat{+}B$ as the union of $(a_{max}\widehat{+}B)\cup (A\widehat{+}b_{min})$, so we may expect in general, when $|A\widehat{+}B|$ is close to $|A|+|B|$ in $\mathbb{Z}$, $A\widehat{+}B$ could almost be written as a union of such copies. Similar results may very possibly be found in $\mathbb{Z}/p\mathbb{Z}$ if a completely elementary proof is found as hinted by our results in $\mathbb{Z}/p\mathbb{Z}$.

\end{document}